\documentclass[reqno,11pt]{amsart}
\usepackage{amsmath,amssymb}
\usepackage{graphicx}
\usepackage[top=1in,bottom=1in,left=1in,right=1in]{geometry}

\newtheorem{thm}{Theorem}

\newtheorem{prop}{Proposition}[section]

\theoremstyle{definition}
\newtheorem{rem}{Remark}[section]
\numberwithin{equation}{section}

\usepackage{graphicx, color}

\newcommand{\eps}{\varepsilon}

\newcommand{\be}{\begin{equation} \label}
\newcommand{\ee}{\end{equation}}
\newcommand{\bea}{\begin{eqnarray}\label}
\newcommand{\eea}{\end{eqnarray}}
\newcommand{\bas}{\begin{eqnarray*}}
\newcommand{\eas}{\end{eqnarray*}}

\def\R{\mathbb R}

\def\be{\begin{equation}}
\def\ee{\end{equation}}

\title[The heat equation with combined nonlinearities]
{Discontinuous critical Fujita exponents for the heat equation with combined nonlinearities}

\author{Mohamed Jleli, Bessem Samet, Philippe Souplet}

\subjclass[2010]{35K05; 35B44; 35B33}

\keywords{Heat equation; combined nonlinearities; Fujita critical exponent; blow-up; global existence}

\begin{document}

\maketitle

\begin{abstract}
We consider the nonlinear heat equation $u_t-\Delta u =|u|^p+b |\nabla u|^q$ in $(0,\infty)\times \R^n$, where $n\geq 1$, $p>1$, $q\geq 1$ and $b>0$. First, we focus our attention on positive solutions and obtain an optimal Fujita-type result:
any positive solution blows up in finite time
if $p\leq 1+\frac{2}{n}$ or $q\leq 1+\frac{1}{n+1}$,  while global classical positive solutions exist for suitably small  initial data
 when $p>1+\frac{2}{n}$ and $q> 1+\frac{1}{n+1}$.  Although finite time blow-up cannot be produced by the gradient term alone and should be considered as an effect of the source term $|u|^p$,
this result shows that the gradient term induces an interesting phenomenon of discontinuity of the critical Fujita exponent,
jumping from $p=1+\frac{2}{n}$ to $p=\infty$ as $q$ reaches the value $1+\frac{1}{n+1}$ from above. 
Next, we investigate the case of sign-changing solutions and 
show that 
if $p\le  1+\frac{2}{n}$ or $0<(q-1)(np-1)\le 1$,
then the solution blows up in finite time for any nontrivial initial data with nonnegative mean. Finally,
a Fujita-type result, with a different critical exponent, is 
obtained for  sign-changing solutions to the inhomogeneous version of this problem.
\end{abstract}

\section{Introduction}

Consider the problem
\begin{eqnarray}\label{pbm1}
\left\{\begin{array}{lllll}
u_t-\Delta u&=&|u|^p+b|\nabla u|^q &\mbox{in}& (0,\infty)\times \mathbb{R}^n,
\vspace{1mm}\\
u(0,x)&=&u_0(x) &\mbox{in}&  \R^n,
\end{array}
\right.
\end{eqnarray}
 and its inhomogeneous version
\begin{eqnarray}\label{pbm1b}
\left\{\begin{array}{lllll}
u_t-\Delta u&=&|u|^p+b|\nabla u|^q+h(x) &\mbox{in}& (0,\infty)\times \mathbb{R}^n,
\vspace{1mm}\\
u(0,x)&=&u_0(x) &\mbox{in}&  \R^n.
\end{array}
\right.
\end{eqnarray}
Here $p>1$, $q\ge 1$, $b>0$ and $0\le h\in BC^1(\R^n)$, where 
$$
BC^1(\R^n)=\bigl\{\phi\in C^1(\R^n):\, \phi, \nabla\phi\in L^\infty(\R^n)\bigr\}.
$$
Recall (see e.g. \cite[Section 35]{QS07}) that problem \eqref{pbm1b} (\eqref{pbm1} is a special case) 
is locally well posed \footnote{Local well-posedness remains true under weaker regularity assumptions on the initial data;
however we shall not discuss this here, since our focus is on the large time behavior of solutions.} for $u_0\in BC^1(\R^n)$.
Namely, there exists $\sigma>0$ such that problem \eqref{pbm1b} admits  a unique local classical solution $u\in C^{1,2}((0,\sigma)\times \R^n)$ with $u,\nabla u\in BC([0,\sigma)\times \R^n)$, where $BC([0,\sigma)\times \R^n)$ denotes the space of bounded and continuous functions in $[0,\sigma)\times \R^n$. Denoting by $T=T(u_0)\in (0,\infty]$ the existence time of the (unique) maximal classical solution,
we have blow-up alternative in $C^1$ norm, i.e.
\be\label{BUalt}
T<\infty\Longrightarrow \lim_{t\to T^-}\|u(t)\|_{C^1}=\infty.
\ee
Moreover, we have $u>0$ in $(0,T)\times \R^n$ whenever $u_0\ge 0$ and $u_0\not\equiv 0$. Actually, it can be shown that the $L^\infty$ norm cannot stay bounded if $T<\infty$ (see Proposition \ref{PR1}). The first aim of this paper is to give optimal Fujita-type results for 
 problems \eqref{pbm1} and \eqref{pbm1b}. We mention below some motivations for studying such problems.

In the case $b=0$, problem \eqref{pbm1} reduces to the semilinear heat equation 
\begin{eqnarray}\label{pfuj}
\left\{\begin{array}{lllll}
u_t-\Delta u&=&|u|^p &\mbox{in}& (0,\infty)\times \mathbb{R}^n,
\vspace{1mm}\\
u(0,x)&=&u_0(x) &\mbox{in}&  \R^n.
\end{array}
\right.
\end{eqnarray}
In \cite{Fuj66}, Fujita proved the following results for problem \eqref{pfuj}:
\begin{itemize}
\item[$\bullet$] If $1<p<1+\frac{2}{n}$, then problem \eqref{pfuj} admits no global positive solutions;
\smallskip
\item[$\bullet$] If $p>1+\frac{2}{n}$, then for positive initial values bounded by a sufficiently small Gaussian, problem \eqref{pfuj} admits positive global solutions. 
\end{itemize}
\smallskip
Later, it was shown that $1+\frac{2}{n}$ belongs to the blow-up case (see \cite{A,H,sugi_ojm75,K,W}). The exponent
\be\label{pf}
p_F=1+\frac{2}{n}
\ee
is said to be critical in the sense of Fujita. Note that in \cite{W}, it was also shown
that if $p>p_F$ and $u_0\in L^{\frac{n(p-1)}{2}}(\mathbb{R}^n)$ is sufficiently small, then we have global positive solutions. 

Consider now the problem
\begin{eqnarray}\label{pbVHJ}
\left\{\begin{array}{lllll}
u_t-\Delta u&=&|\nabla u|^q&\mbox{in}& (0,\infty)\times \mathbb{R}^n,
\vspace{1mm}\\
u(0,x)&=&u_0(x) &\mbox{in}&  \R^n.
\end{array}
\right.
\end{eqnarray}
Problem \eqref{pbVHJ} is the well-known viscous Hamilton-Jacobi 
equation. Such type of equations 
 arise in stochastic control theory (see \cite{Lions}) and have also been used to describe a model for growing random interfaces (see e.g. \cite{KA,KS}). On the other hand, it is known that if $u_0\in BC^1(\R^n)$, 
problem \eqref{pbVHJ} admits a unique local in time solution, classical for $t>0$.    Moreover, $u$ exists globally for all $t\geq 0$, and satisfies
$$
\|u(t)\|_\infty \leq \|u_0\|_\infty\quad\mbox{and}\quad \|\nabla u(t)\|_\infty \leq     \|\nabla u_0\|_\infty,\quad t\geq 0.
$$
For more details, see e.g. \cite{AB,B,BSW,BSW2,BL,GGK}. The large time behavior of positive solutions to problem \eqref{pbVHJ} 
was investigated in \cite{BSW2, LS, BKL04, Gil05}.
In particular, under the assumption $u_0\geq 0$,  $u_0\not\equiv 0$ and $u_0\in BC^1(\R^n)\cap L^1(\mathbb{R}^n)$,
denoting $I_\infty=\lim_{t\to \infty} \|u(t)\|_1$, it was shown in \cite{LS} that 
\begin{itemize}
\item[$\bullet$] If $1\leq q\leq \frac{n+2}{n+1}$, then $I_\infty=\infty$, for all $u_0$;
\smallskip
\item[$\bullet$] If $\frac{n+2}{n+1}<q<2$, then both $I_\infty=\infty$ and $I_\infty<\infty$ occur;
\smallskip
\item[$\bullet$] If $q\geq 2$, then $I_\infty<\infty$, for all $u_0$. 
\end{itemize}
 Moreover, a classification of the pointwise asymptotic behaviors of positive solutions of \eqref{pbVHJ} 
as $t\to\infty$ was obtained in \cite{BKL04, Gil05}.

Next, consider the problem
 \begin{eqnarray}\label{pbminesb}
\left\{
\begin{array}{lllll}
u_t-\Delta u&=&|u|^{p-1} u-b |\nabla u|^q &\mbox{in}& (0,\infty)\times \R^n,
\vspace{1mm}\\
u(0,x)&=&u_0(x) &\mbox{in}&  \R^n, 
\end{array}
\right.
\end{eqnarray}
where $p>1$, $q\geq 1$ and $b>0$.
It is known that problem \eqref{pbminesb} admits a unique, maximal in time, classical solution $u\geq 0$, 
for all $u_0\in BC^1(\R^n)$ with $u_0\geq 0$.
Problem \eqref{pbminesb}  (also in bounded domains with Dirichlet boundary conditions) was first proposed in \cite{CW} 
in order to investigate the possible
effect of a damping gradient term on global existence or nonexistence. 
On the other hand, a model in population dynamics was proposed in \cite{S}, where this problem  describes the evolution of the population density of a biological species, under the effect of certain natural mechanisms. 
  For a detailled review of this problem, we refer to the survey paper \cite{S2}. 
For any $p>p_F$ and $b>0$ (and any $q$), it follows from a comparison argument with 
 the case $b=0$ that there always exist positive global solutions to  problem \eqref{pbminesb}.
Moreover, if $1<q<\frac{2p}{p+1}$ or if $q=\frac{2p}{p+1}$ and $b$ is large, 
 then both blowing-up and stationary positive solutions do exist for any $p>1$. Also some global positive solutions exist when $q\ge p>1$.
Therefore, no Fujita-type phenomenon occurs in these cases. As an answer to an open problem proposed in \cite{S2}, 
it was shown in~\cite{MP02}, by a test function approach, that if $q=\frac{2p}{p+1}$, $1<p<p_F$ and $b$ is small enough, 
then problem \eqref{pbminesb} admits no positive global solutions. Hence, $p_F$ is critical in this case. 
 As for the remaining range $\frac{2p}{p+1}<q<p$ and $p>p_F$, it seems to be unknown whether \eqref{pbminesb} admits any global positive solutions.
 
For the inhomogeneous problem \eqref{pbm1b} without gradient term (i.e. for $b=0$),  it was proved in \cite{BLZ}  (see also \cite{Zhang} for the case of positive solutions) that the Fujita exponent becomes different from the homogeneous case and is given by $p^*:
=\frac{n}{n-2}$ ($n\geq 2$). Namely,  it was shown that 
\begin{itemize}
\item[$\bullet$]  If $p<p^*$ and $\int_{\R^n}h(x)\,dx>0$, then the problem admits no global solutions;
\smallskip
\item[$\bullet$] If $n\geq 3$ and $p>p^*$, then for any $\delta>0$, there exists $\varepsilon>0$ such that the problem  has global solutions provided that $|h(x)|,|u_0(x)|\leq \frac{\varepsilon}{(1+|x|^{n+\delta})}$ regardless of whether or not $\int_{\R^n}h(x)\,dx>0$. 
\end{itemize} 
Moreover,  it was  proved that when $p=\frac{n}{n-2}$ ($n\geq 3$) and under a decay condition on $h(x)$ as $|x|\to \infty$, the problem admits no global solutions.

As far as we know, the study of Fujita-type results for problems  \eqref{pbm1} and \eqref{pbm1b} was not considered previously. Note that a similar question was studied in the case of nonlinear wave equations of the type
$$
u_{tt}-\Delta u = |u|^p+|u_t|^q,\quad t>0,\, x\in \mathbb{R}^n,
$$ 
where $p,q>1$ (see \cite{ZH}).

The rest of the paper is organized as follows. In Section \ref{sec2}, problem 
\eqref{pbm1} is investigated. We first obtain an optimal Fujita-type result for positive solutions.  Next, we discuss the case 
 of possibly sign-changing solutions  under a nonnegative initial mean condition $\int_{\R^n} u_0(x)\,dx\ge 0$, and derive a finite time blow-up result. In Section \ref{sec3}, we investigate the case of possibly sign-changing solutions to the nonhomogeneous problem \eqref{pbm1b} 
 under the condition $\int_{\R^n}h(x)\,dx>0$
 and obtain a Fujita-type result, with a different critical exponent.

\section{The study of problem \eqref{pbm1}}\label{sec2}

We first focus our attention on positive solutions
and thus consider problem \eqref{pbm1} with $u_0\geq 0$, $u_0\not\equiv 0$. 
Our first main result is given by the following theorem.

\medskip

\begin{thm}\label{T1}
Let $n\geq 1$, $p>1$, $q\ge 1$ and $b>0$. 
\begin{itemize}
\item[(i)] Assume 
$$
p\le p_F\quad\hbox{or}\quad  q\le 1+\frac{1}{n+1},
$$
where $p_F$ is given by \eqref{pf}. Then for any $u_0\in BC^1(\R^n)$ such that $u_0\ge 0$ and $u_0\not\equiv 0$, the solution to problem \eqref{pbm1} blows up in finite time.
\item[(ii)] Assume 
\be\label{hyp2}
p>p_F\quad\hbox{and}\quad  q>1+\frac{1}{n+1}.
\ee
Then problem \eqref{pbm1} admits positive global classical solutions for suitably small initial data. Namely, this is for instance the case if $u_0\in BC^1(\R^n)$ satisfies
$$
0\le u_0(x)\le  \eps e^{-\frac{|x|^2}{4}}\quad\mbox{and}\quad u_0\not\equiv 0, 
$$
with $\eps>0$ small enough.
\end{itemize}
\end{thm}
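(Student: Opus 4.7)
The plan is to treat blow-up (part (i)) and small-data global existence (part (ii)) by different tools. In (i), the case $p \le p_F$ is immediate by comparison, while the case $q \le 1+\tfrac{1}{n+1}$ with $p>p_F$ is the essential new point; in (ii) the natural tool is a Duhamel fixed-point in a Gaussian-weighted Banach space.

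For (i) with $p \le p_F$, since $u \ge 0$ and $b|\nabla u|^q \ge 0$, the full source dominates the pure power, so $u$ is a supersolution of $w_t - \Delta w = w^p$ with the same initial data. The parabolic comparison principle gives $u \ge w$ on their common existence interval, and the classical Fujita--Hayakawa theorem (\cite{Fuj66,H,sugi_ojm75,K,W}, already recalled in the introduction) yields finite-time blow-up of $w$ whenever $1 < p \le p_F$ and $w_0 \ge 0$, $w_0 \not\equiv 0$. Hence $u$ blows up as well.

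For (i) with $q \le 1+\tfrac{1}{n+1}$ and $p>p_F$, the $|u|^p$ term alone is Fujita-supercritical, so the gradient term must drive blow-up. The plan is a Baras--Pierre/Mitidieri--Pohozaev test-function argument. One multiplies the equation by a non-negative cutoff $\Psi_{R,T}(t,x) := \varphi(|x|^2/R^2)^k\,\chi(t/T)^m$ with $k,m$ large and $\varphi,\chi$ smooth, compactly supported, equal to $1$ near the origin, and integrates over $(0,T) \times \R^n$. Integration by parts yields
\[
\int_0^T\!\!\int_{\R^n}\!\bigl[u^p + b|\nabla u|^q\bigr]\Psi_{R,T}\,dx\,dt + \int_{\R^n}\! u_0\,\Psi_{R,T}(0,\cdot)\,dx \;=\; \int_0^T\!\!\int_{\R^n}\! u\,\bigl(-\partial_t \Psi_{R,T} - \Delta \Psi_{R,T}\bigr)\,dx\,dt.
\]
Young/Hölder estimates absorb the $u^p$ contribution and leave a remainder of order $T^{1-p'}R^n + T R^{n-2p'}$ with $p'=p/(p-1)$. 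To make the gradient term contribute, I would either (a) combine with a duality lower bound $\int\!\int |\nabla u|^q \Psi \gtrsim \bigl|\int\!\int u\,\nabla\!\cdot(\cdots)\bigr|^q$ from a further integration by parts against $\nabla u$, or (b) exploit the self-similar scaling $u\mapsto \lambda^{(2-q)/(q-1)} u(\lambda^2 t,\lambda x)$ of the viscous Hamilton--Jacobi equation and choose an anisotropic $T \asymp R^\sigma$ (rather than $T \asymp R^2$) adapted to that scaling, so that the right-hand side vanishes as $R\to\infty$ exactly when $q \le (n+2)/(n+1)$. The critical threshold $q=1+\tfrac{1}{n+1}$ coincides with the exponent below which the $L^1$-mass of the pure viscous Hamilton--Jacobi equation diverges (cf.\ \cite{LS}), which is the reason the argument should go through. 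The main obstacle will be producing a usable lower bound on $\int\!\int |\nabla u|^q \Psi_{R,T}$ that fits cleanly with the $u^p$ Young absorption.

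For part (ii), the strategy is a Duhamel contraction in the Gaussian-weighted space
\[
X := \bigl\{u \in C([0,\infty); BC^1(\R^n)): \|u\|_X < \infty\bigr\},\qquad \|u\|_X := \sup_{t\ge 0,\,x}\frac{|u(t,x)|}{Z(t,x)} + \sup_{t>0,\,x}\frac{(t+1)^{1/2}|\nabla u(t,x)|}{Z(t,x)},
\]
with envelope $Z(t,x) := (t+1)^{-n/2}\exp\bigl(-|x|^2/[\kappa(t+1)]\bigr)$ for some $\kappa$ slightly greater than $4$ (to leave room for the temporary Gaussian-sharpening caused by $u^p$ and $|\nabla u|^q$). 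On bounded balls of $X$ the map
\[
\Phi(u)(t) := e^{t\Delta}u_0 + \int_0^t e^{(t-s)\Delta}\bigl[u^p(s) + b|\nabla u(s)|^q\bigr]\,ds
\]
obeys $|u|^p \lesssim \|u\|_X^p Z^p$ and $|\nabla u|^q \lesssim \|u\|_X^q Z^q (t+1)^{-q/2}$, and the action of $e^{(t-s)\Delta}$ on Gaussians preserves the envelope up to constants. The Duhamel terms then produce $\lesssim Z(t,x)$ after convergence of the two time integrals $\int_0^t (s+1)^{-np/2}(\cdots)\,ds$ and $\int_0^t (t-s)^{-1/2}(s+1)^{-q(n+1)/2}(\cdots)\,ds$; a direct computation shows these convergences are equivalent exactly to $p>1+\tfrac{2}{n}$ and $q>1+\tfrac{1}{n+1}$, respectively. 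For $\eps$ small enough, $\Phi$ becomes a contraction on $\{u\in X: \|u\|_X\le 2\eps\}$, and its fixed point is the unique global $X$-solution. By uniqueness it coincides with the maximal classical solution supplied by the local theory, and positivity follows from the maximum principle. The main technical care is bookkeeping of the Gaussian constants through the convolutions, which is what dictates the choice of $\kappa$.
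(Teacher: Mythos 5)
Your treatment of part (i) in the case $p\le p_F$ (comparison with the Fujita equation) and your overall plan for part (ii) are fine, but the heart of the theorem is part (i) with $p>p_F$ and $q\le 1+\frac{1}{n+1}$, and there your proposal has a genuine gap. The rescaled test-function method cannot reach the threshold $1+\frac{1}{n+1}$. Concretely: after the Young/H\"older absorption, with a cutoff supported in $\{|x|\lesssim R,\ t\lesssim T\}$ and $T\asymp R^{\sigma}$ left free, the two remainder terms scale like $TR^{n}\,T^{-p/(p-1)}$ (from $\partial_t\Psi$, absorbed by $|u|^p$) and $TR^{n}\,R^{-q/(q-1)}$ (from $\nabla\Psi$, absorbed by $|\nabla u|^q$). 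Requiring both exponents to be nonpositive forces $\frac{q-1}{q-n(q-1)}\le \frac{\sigma^{-1}}{\,\cdot\,}$ --- i.e., after optimizing over the anisotropy, exactly the condition $(q-1)(np-1)\le 1$, that is $q\le 1+\frac{1}{np-1}$. This is \emph{strictly smaller} than $1+\frac{1}{n+1}$ whenever $p>p_F$ (since then $np-1>n+1$), and it is precisely the weaker threshold obtained by this very method for sign-changing data in Theorem \ref{T2}; the intermediate range $q\in\bigl(1+\frac{1}{np-1},\,1+\frac{1}{n+1}\bigr]$ is explicitly recorded as open for sign-changing solutions. So neither your option (a) nor the anisotropic tuning in option (b) can close the gap, and your own admission that a usable lower bound on $\iint|\nabla u|^q\Psi$ is ``the main obstacle'' is the symptom of this structural limitation.

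The missing idea is to exploit positivity in a different way. Compare $u$ from \emph{below} with the global solution $w$ of the pure viscous Hamilton--Jacobi problem $w_t-\Delta w=b|\nabla w|^q$, $w(0)=u_0$. By Gilding's results, for $1\le q\le\frac{n+2}{n+1}$ one has $w(t,\cdot)\to\ell\in(0,\infty)$ uniformly on compact sets, hence $u(t_0,\cdot)\ge\frac{\ell}{2}$ on an arbitrarily large ball $B_R$. Kaplan's eigenfunction method on $B_R$ (the gradient term having the favorable sign) then yields $y'\ge y^p-\lambda R^{-2}y$ with $y(t_0)\ge\frac{\ell}{2}$, and choosing $R$ large enough that $\lambda R^{-2}<\bigl(\frac{\ell}{2}\bigr)^{p-1}$ forces finite-time blow-up for \emph{every} $p>1$. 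Your heuristic link to the divergence of the $L^1$-mass of the Hamilton--Jacobi flow is the right intuition, but the proof runs through locally uniform convergence to a positive constant, not through test functions. As for part (ii), your Duhamel contraction in a Gaussian-weighted space is a legitimate alternative: the two convergence conditions you identify are exactly the inequalities $\frac{n(p-1)}{2}>1$ and $\frac{n(q-1)+q}{2}>1$ that the paper obtains by instead exhibiting the explicit supersolution $z=\eps(t+1)^{k-\frac n2}e^{-|x|^2/4(t+1)}$ and invoking comparison plus Proposition \ref{PR1}; the supersolution route is shorter and avoids the Gaussian bookkeeping, but either works.
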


\begin{rem}
(i) In view of the results recalled in introduction, 
the finite time blow-up cannot be produced by the gradient term alone and should be considered as an effect of the source term $|u|^p$.
However our result reveals the occurence of an interesting phenomenon of discontinuity of the Fujita exponent
 in the presence of the gradient term.
Namely, as long as $q>1+\frac{1}{n+1}$, the Fujita exponent remains the same as for the problem without gradient term,
i.e. $p=1+\frac{2}{n}$, and when $q$ becomes equal to, or smaller than, $1+\frac{1}{n+1}$, the Fujita exponent jumps to $p=\infty$
(i.e. no positive global solutions exist whatever the value of $p>1$).
A similar situation will be encountered in the next section for the inhomogeneous problem. 
\smallskip

(ii) A related, though different, phenomenon was observed in \cite{AE93} for positive solutions of the reaction-convection-diffusion problem 
\begin{eqnarray}\label{CRD}
\left\{
\begin{array}{lllll}
u_t-\Delta u&=&|u|^{p-1} u+a\cdot \nabla(u^q) &\mbox{in}& (0,\infty)\times \R^n, 
\vspace{1mm}\\
u(0,x)&=&u_0(x) &\mbox{in}&  \R^n, 
\end{array}
\right.
\end{eqnarray}
where $p>1$, $q\geq 1$ and $a\ne 0$.
Namely, the Fujita exponent for \eqref{CRD} was shown to be given by:
$$p_1(n,q):=
\begin{cases}
1+{2\over n},&\quad \hbox{ if } q\ge \frac{n+1}{n},\\
1+{2q\over n+1},&\quad \hbox{ if } 1<q<\frac{n+1}{n},\\
1+{2\over n},&\quad \hbox{ if } q=1.
\end{cases}
$$
A first change of critical exponent thus occurs when $q$ decreases from $\frac{n+1}{n}$, but in a continuous way,
and a discontinuous jump of the critical exponent then occurs when $q$ reaches $1$ from above. We refer to \cite{DL00,Lev90} for general survey articles on critical Fujita exponents and to \cite{CDW07,LoQu11,FiKi12} for other non-standard behaviors regarding such exponents (appearing in certain time-nonlocal problems).
\end{rem}

The following proposition  will be used in the  proofs of Theorems \ref{T1} and \ref{T3}.

\begin{prop}\label{PR1}
Consider problem \eqref{pbm1b} with $n\geq 1$, $p>1$, $q\ge 1$, $b\in\R$, $u_0, h\in BC^1(\mathbb{R}^n)$ and $h\ge 0$.
\begin{itemize}
\item[(i)] Assume that 
\be\label{asi}
 u(t,x)\le M,\quad (t,x)\in (0,\min(T,\tau))\times \R^n,
\ee
for some $M,\tau>0$. Then there exists  a positive constant $N=N(p,M,\tau,\|u_0\|_{C^1},\|\nabla h\|_\infty)$ such that 
\be\label{gb}
|\nabla u(t,x)|\le N,\quad (t,x)\in (0,\min(T,\tau))\times \R^n.
\ee
\item[(ii)] If $T<\infty$, then we have
$$
\limsup_{t\to T^-} \,\Bigl(\sup_{x\in\R^n}u(t,x)\Bigr)=\infty.
$$
\end{itemize}
\end{prop}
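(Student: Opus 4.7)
Part (ii) reduces at once to part (i) combined with the $C^1$ blow-up alternative \eqref{BUalt}: if $T<\infty$ and $u$ stayed bounded up to time $T$, applying (i) with $\tau=T$ would yield a uniform bound on $\nabla u$ as well, contradicting \eqref{BUalt}. So the content of the proposition lies entirely in (i).

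For (i), my plan is a Bernstein-type estimate on $v:=|\nabla u|^2$. Differentiating the equation yields
$$
v_t-\Delta v=-2|D^2u|^2+2p|u|^{p-2}u\,v+2bq\,|\nabla u|^{q-2}\nabla u\cdot D^2u\cdot\nabla u+2\nabla u\cdot\nabla h.
$$
The key structural observation is that the apparently dangerous cubic cross term is a \emph{first-order drift} in $v$: since $\nabla v=2D^2u\cdot\nabla u$, one has $\nabla u\cdot D^2u\cdot\nabla u=\tfrac{1}{2}\,\nabla u\cdot\nabla v$. Discarding the favourable $-2|D^2u|^2\le 0$, using $|u|\le M$ together with the Young inequality $2\sqrt v\,\|\nabla h\|_\infty\le v+\|\nabla h\|_\infty^2$, the relation becomes the \emph{linear} parabolic differential inequality
$$
v_t-\Delta v-bq|\nabla u|^{q-2}\nabla u\cdot\nabla v\ \le\ A\,v+B,\qquad A:=2pM^{p-1}+1,\ B:=\|\nabla h\|_\infty^2,
$$
valid on $(0,\min(T,\tau))\times\R^n$. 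Comparing with the ODE solution $w(t)$ of $w'=Aw+B$, $w(0)=\|\nabla u_0\|_\infty^2$, I expect the pointwise bound $v\le w$, which evaluated at $t=\tau$ gives the claimed $N=N(p,M,\tau,\|u_0\|_{C^1},\|\nabla h\|_\infty)$.

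The delicate point is the parabolic comparison step on the unbounded space $\R^n$. To absorb the zeroth-order term, I would pass to $\tilde z:=e^{-At}(v-w)$, which (using that $w$ is $x$-independent) turns out to be a bounded subsolution of the pure drift-diffusion inequality $\tilde z_t-\Delta\tilde z-bq|\nabla u|^{q-2}\nabla u\cdot\nabla\tilde z\le 0$ with $\tilde z(0,\cdot)\le 0$. On any strict subinterval $[0,\tau']\subset[0,\min(T,\tau))$ the drift coefficient is bounded via the local $BC^1$-theory, and a classical maximum principle for bounded subsolutions on $\R^n$ applies: subtracting a mild spatial barrier such as $\eps(\sqrt{1+|x|^2}+Kt)$ with $K>0$ chosen large enough (depending only on $n$ and the drift bound) forces the supremum of the resulting function to be attained at a finite interior point, where the standard first- and second-order tests yield a strict-inequality contradiction. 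Letting $\eps\to 0$ and then $\tau'\uparrow\min(T,\tau)$ closes the argument.
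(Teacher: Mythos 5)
Your argument is correct in substance and reaches the bound \eqref{gb} by a route that is genuinely different from, though closely related to, the paper's. The paper never passes to $v=|\nabla u|^2$: it differentiates the equation componentwise, observes that $z_i=\pm\partial_{x_i}u$ solves the linear equation \eqref{zieq} with drift $bq|\nabla u|^{q-2}\nabla u$ and zeroth-order coefficient $p|u|^{p-2}u$, and compares $z_i$ with the spatially constant supersolution $\bigl(\|z_i(0,\cdot)\|_\infty+K^{-1}\|\partial_{x_i}h\|_\infty\bigr)e^{Kt}$; moreover it only runs this computation for $q>2$, disposing of $q\le 2$ by citing classical gradient estimates (Lady\v{z}enskaja--Solonnikov--Ural'ceva, Lieberman). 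Your Bernstein variant, with $\nabla u\cdot D^2u\cdot\nabla u=\tfrac12\nabla u\cdot\nabla v$ converting the cubic term into a drift, is equally standard and buys two things: it is uniform over all $q>1$, and, since $v\ge 0$, only an \emph{upper} bound on the coefficient $2p|u|^{p-2}u$ is needed, so the one-sided hypothesis \eqref{asi} feeds in directly, whereas the paper must first derive the lower bound $u\ge\inf u_0$ from the maximum principle because $z_i$ changes sign. Your explicit Phragm\'en--Lindel\"of barrier $\eps(\sqrt{1+|x|^2}+Kt)$ for the comparison step on the unbounded domain is also spelled out more carefully than in the paper, which invokes the supersolution comparison on $\R^n$ without justification.

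Two small points should be repaired. First, you write ``using $|u|\le M$'', but \eqref{asi} is only a one-sided bound. The step survives anyway, because $2p|u|^{p-2}u\,v\le 2pM^{p-1}v$ already follows from $u\le M$ together with $v\ge 0$ (the coefficient is negative where $u<0$); alternatively, derive $u\ge m:=\inf_{\R^n}u_0$ by comparing with the constant subsolution $m$, which works for every $b\in\R$ since the gradient term vanishes on constants. The same care is needed in your one-line proof of (ii): the negation of its conclusion only bounds $u$ from above, and you need this maximum-principle lower bound to conclude that $\|u(t)\|_\infty$, and not merely $\sup_x u(t,x)$, stays bounded before contradicting \eqref{BUalt}. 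Second, at $q=1$ the map $\xi\mapsto|\xi|^q$ is not differentiable at $\xi=0$, so the drift coefficient $bq|\nabla u|^{q-2}\nabla u$ is discontinuous at critical points of $u$ and your differentiated identity for $v$ is only formal there; either exclude $q=1$ and quote the classical gradient estimates as the paper does for $q\le 2$, or observe that any positive maximum of $\tilde z-\eps\phi$ occurs where $v>w\ge 0$, hence where $\nabla u\ne 0$ and the computation is legitimate, with the drift still bounded by $|b|$ there.
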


\begin{proof}
(i)  By the maximum principle, we have 
$$\inf_{t\in (0,T),\,x\in\R^n} u(t,x)\ge m:=\inf_{x\in\R^n}u_0(x)>-\infty,$$
hence
$$|u(t,x)|\le \max(M,|m|),\quad (t,x)\in (0,\min(T,\tau))\times \R^n.$$
If $q\le 2$, the assertion then follows from standard gradient bounds (see e.g. \cite{LSU,Lie86}). Let us thus consider the case $q>2$ (but the following argument actually works for any $q>1$). Let $\sigma\in \{-1,1\}$ and denote
$$
z_i(t,x):=\sigma\partial_{x_i}u(t,x),\quad  (t,x)\in (0,\min(T,\tau))\times \R^n,\,\, i\in\{1,\cdots,n\}.
$$
It can be easily seen that  for all $i\in\{1,\cdots,n\}$, $z_i$ solves
\be \label{zieq}
(z_i)_t-\Delta z_i=p|u|^{p-2}uz_i+bq|\nabla u|^{q-2}\nabla u\cdot\nabla z_i +\sigma\partial_{x_i}h(x), \quad (t,x)\in (0,\min(T,\tau))\times \R^n,
\ee
where $\cdot$ denotes the scalar product in $\mathbb{R}^n$. Hence, using \eqref{asi}, for all $i\in\{1,\cdots,n\}$, there holds
$$
(z_i)_t-\Delta z_i\le K|z_i|+q|b||\nabla u|^{q-1}|\nabla z_i|
+ |\partial_{x_i}h|,\quad  (t,x)\in (0,\min(T,\tau))\times \R^n,
$$
where $K:= p\max(M^{p-1},|m|^{p-1})$. On the other hand, observe that for all $i\in\{1,\cdots,n\}$, 
$$
\overline z_i(t,x):=\bigl(\|z_i(0,\cdot)\|_\infty+K^{-1}\|\partial_{x_i}h\|_\infty\bigr)e^{Kt},\quad (t,x)\in (0,\min(T,\tau))\times \R^n
$$
is a supersolution to \eqref{zieq}. Hence, one deduces that 
$$
\pm\partial_{x_i}u(t,x)\le \bigl(\|z_i(0,\cdot)\|_\infty+K^{-1}\|\partial_{x_i}h\|_\infty\bigr)
e^{K\tau},\quad (t,x)\in (0,\min(T,\tau))\times \R^n,\,\, i\in\{1,\cdots,n\},
$$
which yields the desired gradient bound \eqref{gb}.

\medskip

(ii) This is an immediate consequence of \eqref{BUalt} and assertion (i).
\end{proof}

Now, we are able to prove Theorem \ref{T1}.

\begin{proof}[Proof of Theorem \ref{T1}.]

(i)  The case $p\le 1+\frac{2}{n}$ is immediate.
Indeed it is well known that the solution $v>0$ to the problem
\begin{eqnarray*}
\left\{\begin{array}{lllll}
v_t-\Delta v&=&v^p &\mbox{in}& (0,\infty)\times \mathbb{R}^n,\\
v(0,x)&=&u_0(x) &\mbox{in}& \mathbb{R}^n
\end{array}
\right.
\end{eqnarray*}
blows up in finite time in $L^\infty$ norm (cf.~the introduction).
 Since, by the comparison principle, we have $u\ge v$ as long as both solutions exist, it follows that $T<\infty$.

\smallskip

Let us thus assume $1\le q\le 1+\frac{1}{n+1}$. Consider the corresponding diffusive Hamilton-Jacobi equation
\begin{eqnarray}\label{pbm2}
\left\{\begin{array}{lllll}
w_t-\Delta w&=&b|\nabla w|^q &\mbox{in}& (0,\infty)\times \mathbb{R}^n,\\
w(0,x)&=&u_0(x) &\mbox{in}& \R^n.
\end{array}
\right.
\end{eqnarray}
We know that problem \eqref{pbm2} admits a unique global classical solution. By \cite[Theorems 2 and 3]{Gil05}, since $1\le q\le 1+\frac{1}{n+1}$, there exists $\ell\in (0,\infty)$ such that
\be\label{cvloc}
\lim_{t\to\infty} w(t,x)=\ell, \quad\hbox{ uniformly on compact subsets of $\R^n$.}
\ee
Now, we argue by contradiction by supposing that $u$ exists globally. By the comparison principle (see e.g. \cite[Section 52]{QS07}), it follows that
\be \label{inquw}
u(t,x)\ge w(t,x),\quad (t,x)\in (0,\infty)\times \R^n.
\ee
Let $R>0$ to be chosen below. By \eqref{cvloc} and \eqref{inquw}, there exists $t_0=t_0(R)>0$ such that
\be\label{cvloc2}
u(t_0,x)\ge w(t_0,x)\ge \frac{\ell}{2},\quad\hbox{ for all $x\in B_R$,}
\ee
where we denote
$B_\rho:=\left\{x\in \mathbb{R}^n:\, |x|< \rho\right\}$ for all $\rho>0$.
From this lower bound, we shall reach a contradiction by using the classical Kaplan eigenfunction method \cite{Kap63} 
combined with a rescaling argument. Let $\lambda >0$ be the first eigenvalue of $-\Delta$ in $H_0^1(B_1)$ and let
$\varphi>0$ be the corresponding eigenfunction normalized by $\int_{B_1}\varphi\,dx =1$. Set 
$$
\varphi_R(x):=R^{-n}\varphi(R^{-1}x), \quad x\in B_R.
$$
Multiplying \eqref{pbm1} by $\varphi_R$, integrating by parts over $B_R$, and using  $\Delta \varphi_R=-\lambda R^{-2}\varphi_R$, for all $t>0$, we obtain
$$
\begin{aligned}
\frac{d}{dt} \int_{B_R} u(t)\varphi_R \,dx +\lambda R^{-2} \int_{B_R} u \varphi_R\,dx 
+\int_{\partial B_R} &(u\partial_\nu \varphi_R-\varphi_R\partial_\nu u)\,d\sigma
=\int_{B_R} \left(u^p +b |\nabla u|^q\right)\varphi_R\,dx,
\end{aligned}
$$ 
where $\partial_\nu$ denotes  the outward normal derivative on $\partial B_R$. Since $b>0$, $\varphi_R=0$ and $\partial_\nu \varphi_R\le 0$ on $\partial B_R$, one deduces that 
$$
\frac{d}{dt} \int_{B_R} u(t)\varphi_R \,dx\geq \int_{B_R} u^p(t)\varphi_R\,dx - \lambda R^{-2} \int_{B_R} u(t) \varphi_R\,dx,\quad t>0.
$$
Next, using $\int_{B_R}\varphi_R\,dx=1$ and Jensen's inequality, it follows that $y(t):=\int_{B_R} u(t)\varphi_R \,dx$ satisfies
\be\label{ineqdiff}
y'(t)\ge y^p(t)-\lambda R^{-2}y(t),\quad t>0.
\ee
But, on the other hand, \eqref{cvloc2} implies
\be \label{inqy}
y(t_0)\ge \frac{\ell}{2} \int_{B_R} \varphi_R\, dx=\frac{\ell}{2}.
\ee
Choosing 
$R>\sqrt{\lambda} \left(\frac{\ell}{2}\right)^{\frac{1-p}{2}}$,
from \eqref{inqy}, we obtain
$y^{p-1}(t_0)>\lambda R^{-2}$,
which implies that  the right-hand side of \eqref{ineqdiff} at $t=t_0$ is positive. But the differential inequality \eqref{ineqdiff} then cannot have a global solution for $t>t_0$: a contradiction. We conclude that $T<\infty$.

\smallskip 

(ii) Let
$$
z(t,x)=\eps(t+1)^k \phi(t,x),\quad (t,x)\in (0,\infty)\times \mathbb{R}^N,
$$
with $\eps,k>0$ to be chosen and
$\phi(t,x)=(t+1)^{-\frac{n}{2}}\exp[{-{|x|^2\over 4(t+1)}}]$.
Set $\mathcal{P}z:=z_t-\Delta z-z^p-b|\nabla z|^q$.
Since $\phi$ solves the heat equation, we have
$$
\begin{aligned}
\mathcal{P}z&=\eps k (t+1)^{k-1} \phi-\eps^p (t+1)^{kp} \phi^p-b\eps^q (t+1)^{kq} \Bigl|{|x|\over 2(t+1)}\Bigr|^q \phi^q \cr
&=\eps (t+1)^{k-1}\phi\Bigl\{k-\eps^{p-1} (t+1)^{k(p-1)+1} \phi^{p-1}-b\eps^{q-1} (t+1)^{k(q-1)+1} 
\Bigl|{|x|\over 2(t+1)}\Bigr|^q\phi^{q-1}\Bigr\}.
\end{aligned}
$$
Using
$\phi^{p-1}\le (t+1)^{-{n(p-1)\over 2}}$
and
$$
b\Bigl|{|x|\over 2(t+1)}\Bigr|^q\phi^{q-1}
=(t+1)^{-{n(q-1)+q\over 2}} {|x|^q\over 2^q(t+1)^{q\over 2}}e^{-{(q-1)|x|^2\over 4(t+1)}}\le C(t+1)^{-{n(q-1)+q\over 2}},
$$
it follows that
$$
\begin{aligned}
\eps^{-1}\phi^{-1} (t+1)^{1-k}\
&\ge k-\eps^{p-1} (t+1)^{k(p-1)+1-{n(p-1)\over 2}}-C\eps^{q-1} (t+1)^{k(q-1)+1-{n(q-1)+q\over 2}}.
\end{aligned}
$$
Now, owing to \eqref{hyp2}, we have
${n(p-1)\over 2}>1$ and ${n(q-1)+q\over 2}>1$.
Therefore, we may choose $0<k<{n\over 2}$ small enough so that
$$
k(p-1)+1-{n(p-1)\over 2}\le 0\quad\hbox{and}\quad k(q-1)+1-{n(q-1)+q\over 2}\le 0.
$$
Then taking $\eps>0$ sufficiently small, it follows that 
$\mathcal{P}z\ge 0$ in $(0,\infty)\times \R^n$.
If $0\le u_0(x)\le \eps e^{-\frac{|x|^2}{4}}$, it then follows from the comparison principle (see e.g. \cite[Section 52]{QS07}) that
$$
u(t,x)\le z(t,x)\le \eps(t+1)^{k-\frac{n}{2}}\le \eps,\quad (t,x)\in (0,T)\times  \R^n.
$$
Finally, by Proposition \ref{PR1}, one deduces that $T=\infty$.  This completes the proof of Theorem \ref{T1}.
\end{proof}

\begin{rem}
 For $p,q,u_0$ as in Theorem \ref{T1}(ii), with a little more work (at the level of the proof of Proposition \ref{PR1}), 
one can show that $|\nabla u|$ is actually bounded.
\end{rem}

Now, we study problem \eqref{pbm1}  under the assumption 
$\int_{\R^n}u_0(x)\,dx \ge 0$. In particular, we focus our attention on sign-changing solutions. Our second main result is given by the following theorem. 

\begin{thm}\label{T2}
Let $n\geq 1$, $p,q>1$ and $b>0$.  Assume 
\be\label{blcd}
 p\le p_F\quad\hbox{or}\quad (q-1)(np-1) \le 1.
\ee
Then  for any $u_0\in BC^1(\R^n)  \cap L^1(\R^n)$ such that $\int_{\R^n}u_0(x)\,dx \ge 0$  and $u_0\not\equiv 0$, the solution to problem \eqref{pbm1} blows up in finite time.
\end{thm}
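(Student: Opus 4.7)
The strategy is the Mitidieri--Pohozaev test function method, exploiting the $L^1$ assumption on $u_0$ to transfer the sign condition $\int u_0 \ge 0$ into the test-function estimate. Arguing by contradiction, assume $u$ exists globally, and for parameters $R, T > 0$ fix $\varphi(t,x) = \xi(t/T)^\lambda \eta(x/R)^\lambda$, where $\xi, \eta$ are standard nonnegative $C^\infty_c$ cutoffs equal to $1$ near $0$ and $\lambda$ is large. Multiplying \eqref{pbm1} by $\varphi$ and integrating by parts (legitimate because $\varphi$ is compactly supported and $u, \nabla u$ are bounded on $[0,T]\times\R^n$), we arrive at
$$\int u_0\,\varphi(0,\cdot)\,dx + A + bB = -\iint u\,\varphi_t\,dx\,dt + \iint \nabla u \cdot \nabla \varphi\,dx\,dt,$$
where $A := \iint |u|^p \varphi$ and $B := \iint |\nabla u|^q \varphi$. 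The plan is to dominate the right-hand side by Hölder and Young using the cutoff bounds $|\varphi_t| \le CT^{-1}\varphi^{1-1/\lambda}$ and $|\nabla \varphi| \le CR^{-1} \varphi^{1-1/\lambda}$, and then to send $R \to \infty$.

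Two weightings match the two branches of the hypothesis. If $p \le p_F$, take the parabolic scaling $T = R^2$, rewrite $\iint \nabla u \cdot \nabla\varphi = -\iint u\,\Delta\varphi$, and absorb both right-hand-side integrals against $A$; Hölder with exponent $p$ yields an error of order $R^{n+2-2p'}$, which is $o(1)$ iff $n(p-1) < 2$. If $(q-1)(np-1) \le 1$, keep $A$ for the time error but use the gradient term $B$ for the space error; balancing $T^{-1/(p-1)} R^n$ against $T R^{n-q'}$ forces the non-standard scaling $T = R^{q'(p-1)/p}$, and both errors become $C R^{n-q'/p}$, which is $o(1)$ iff $(q-1)(np-1) < 1$. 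After absorbing with $\varepsilon \le \min(1/2, b/2)$, either case yields
$$\tfrac12 A + \tfrac{b}{2} B + \int u_0\,\varphi(0,\cdot)\,dx \le C R^{-\mu}, \qquad \mu \ge 0.$$

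In the subcritical regime $\mu > 0$, letting $R\to\infty$ and invoking dominated convergence (using $u_0 \in L^1$ and $0 \le \eta^\lambda \le 1$) gives $\int u_0\,\varphi(0,\cdot) \to \int u_0 \ge 0$. Since the right-hand side tends to $0$ and $A, B \ge 0$, we are forced to $\int u_0 = 0$ and $A + bB \to 0$; monotone convergence then yields $\iint |u|^p = 0$, hence $u \equiv 0$, contradicting $u_0 \not\equiv 0$.

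The main obstacle is the two critical cases $p = p_F$ and $(q-1)(np-1) = 1$, where $\mu = 0$ and the estimate only yields $A, B = O(1)$ uniformly in $R$. The standard remedy is a bootstrap: this uniform bound together with monotone convergence gives $|u|^p, |\nabla u|^q \in L^1((0,\infty)\times\R^n)$. One then re-examines the Hölder step, noting that $\varphi_t$ is supported in $t \in [T/2,T]$ and $\nabla\varphi$ in $\{R/2 \le |x| \le R\}$; the factors of $A$ and $B$ in the Hölder bounds can therefore be replaced by the tail integrals $\int_{T/2}^T \int |u|^p \varphi$ and $\int_0^T \int_{|x| \ge R/2} |\nabla u|^q \varphi$, both of which tend to $0$ by absolute continuity of the integral. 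This upgrades the error from $O(1)$ to $o(1)$ as $R \to \infty$, so the same contradiction closes the argument at the critical exponents.
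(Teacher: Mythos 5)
Your proposal is correct and follows essentially the same route as the paper: a rescaled test-function (Mitidieri--Pohozaev) argument with the anisotropic scaling $T\sim R^{q(p-1)/(p(q-1))}$, absorption of the time error into $\iint|u|^p\varphi$ and of the space error into $b\iint|\nabla u|^q\varphi$, and the standard bootstrap at the critical exponent where the Hölder factors localize to the supports of $\varphi_t$ and $\nabla\varphi$ and vanish by integrability of $|u|^p+|\nabla u|^q$. The only cosmetic difference is that for $p\le p_F$ the paper simply invokes the Mitidieri--Pohozaev result for supersolutions of the Fujita equation, whereas you carry out that (identical in spirit) parabolic-scaling computation explicitly.
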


\begin{rem}
 (i) Assume $p>p_F$. We note that the assumption $(q-1)(np-1)\le 1$ amounts to 
$$q\le q_1(n,p):=1+\frac{1}{np-1}<1+\frac{1}{n+1},$$
so that the assumption on $q$ in Theorem~\ref{T2} is more restrictive than in the case of positive solutions in Theorem~\ref{T1}(i).
It is an open question 
whether there exist global-sign changing solutions to problem \eqref{pbm1} with $\int_{\R^n} u_0(x)\,dx\ge 0$ 
 for $q\in \bigl(1+\frac{1}{np-1},1+\frac{1}{n+1}\bigr]$.
\smallskip

 (ii) The assumption $u_0\in L^1(\R^n)$ in Theorem~\ref{T2} may be replaced with $(u_0)_-\in L^1(\R^n)$,
and the conclusion then remains true when $\int_{\R^n}u_0(x)\,dx=+\infty$.
\end{rem}

\begin{proof}[Proof of Theorem \ref{T2}.]
 When $p\le p_F$, the result follows from \cite[Section 26]{MiPo01}
(which applies to supersolutions of problem \eqref{pfuj} under our assumption on $u_0$).
\smallskip

Let us thus assume $(q-1)(np-1)\le 1$.
The proof is based on a  rescaled test-function argument (see \cite{MiPo01} for a general account of these methods).
Let us suppose that $u$ is a global solution to \eqref{pbm1}. 
Given $\tau>0$, we introduce the test function $\psi=\psi_\tau$ given by
$$
\psi(t,x)=f(t) g(x),\quad (t,x)\in (0,\tau)\times \mathbb{R}^n,
$$
with
\be \label{fg}
 f(t)=\theta^{\frac{p}{p-1}}\left(\frac{t}{\tau}\right)
\quad\mbox{and}\quad g(x)=\xi^{\frac{q}{q-1}}\left(\frac{x}{\tau^{r}}\right),
\ee
 where $\theta\in C^\infty(\R_+)$, $\xi\in C^\infty(\R^n)$ satisfy
$$
0\leq \theta, \xi\leq 1,
\qquad
\theta(\sigma)=\begin{cases}
1 &\text{if }0\leq \sigma\leq 1,\\
0 &\text{if } \sigma \geq 2,
\end{cases}
\qquad
\xi(z)=\begin{cases}
1 &\text{if }|z|\leq 1,\\
0 &\text{if } |z| \geq 2,
\end{cases}
$$  
and $r>0$ is to be chosen. Next, multiplying \eqref{pbm1} by $\psi$ and integrating over $Q_\tau:=(0, 2\tau)\times \R^n$, we obtain
$$
\int_{Q_{\tau}} u_t \psi \,dx\,dt -\int_{Q_{\tau}}  \psi \Delta u \,dx\,dt
=\int_{Q_{\tau}} \left(|u|^p+b|\nabla u|^q\right)\psi\,dx\,dt.
$$
Using an integration by parts, there holds
$$
-\int_{\R^n}u_0(x) g(x)\,dx -\int_{Q_\tau} u\psi_t\,dx\,dt +\int_{Q_\tau}\nabla u\cdot\nabla \psi\,dx\,dt=\int_{Q_{\tau}} \left(|u|^p+b|\nabla u|^q\right)\psi\,dx\,dt,
$$ 
which yields
\be\label{Fest}
\int_{Q_{\tau}} \left(|u|^p+b|\nabla u|^q\right)\psi\,dx\,dt+
\int_{\R^n}u_0(x) g(x)\,dx  \leq \int_{Q_\tau} |u| |\psi_t|\,dx\,dt+
\int_{Q_\tau}|\nabla u||\nabla \psi|\,dx\,dt.
\ee
Further, by H\"older's inequality and the support property of $\theta$, we obtain
$$
\begin{aligned}
\int_{Q_\tau} |u| |\psi_t|\,dx\,dt
&=\frac{p\tau^{-1}}{p-1}\int_{Q_\tau} |u|\theta^{\frac{1}{p-1}}(\tau^{-1}t)g^{\frac{1}{p}}(x)\, |\theta_t(\tau^{-1}t)|g^{\frac{p-1}{p}}(x)\,dx\,dt \\
&\le\frac{p\tau^{-1}}{p-1}\left(\int_\tau^{2\tau}\int_{\R^n} |u|^p\psi \,dx\,dt\right)^{\frac{1}{p}}
\left(\int_\tau^{2\tau}\int_{\R^n} |\theta_t(\tau^{-1}t)|^{\frac{p}{p-1}}g(x)\,dx\,dt\right)^{\frac{p-1}{p}}, \\
\end{aligned}
$$
hence
\be\label{Holder1}
\int_{Q_\tau} |u| |\psi_t|\,dx\,dt
\le  C I_1^{\frac{p-1}{p}}\left(\int_\tau^{2\tau}\int_{\R^n} |u|^p\psi \,dx\,dt\right)^{\frac{1}{p}},
\ee
with
$$
I_1(\tau):= \tau^{-\frac{p}{p-1}}\int_{Q_\tau} |\theta_t(\tau^{-1}t)|^{\frac{p}{p-1}}g(x)\,dx\,dt.
$$
Here and below, $C$ is a  positive  constant (independent of $\tau$),  whose value may change from line to line.  
 In particular, using $\varepsilon$-Young inequality (with $\varepsilon =\frac{1}{2}$), we have
\be\label{esI}
\int_{Q_\tau} |u| |\psi_t|\,dx\,dt\leq \frac{1}{2} \int_{Q_{\tau}} |u|^p\psi\,dx\,dt
 +CI_1(\tau).
\ee
Similarly, we get
$$
\begin{aligned}
\int_{Q_\tau} |\nabla u| & |\nabla \psi|\,dx\,dt
=\frac{q\tau^{-r}}{q-1}\int_{Q_\tau} |\nabla u| f^{\frac{1}{q}}(t)\xi^{\frac{1}{q-1}}(\tau^{-r}x)\, f^{\frac{q-1}{q}}(t)|\nabla\xi(\tau^{-r}x)|\,dx\,dt \\
&\le\frac{q\tau^{-r}}{q-1}\left(\int_0^{2\tau}\int_{\tau^r\le|x|\le 2\tau^r} |\nabla u|^q\psi \,dx\,dt\right)^{\frac{1}{q}} 
 \left(\int_{Q_\tau} |\nabla\xi(\tau^{-r}x)|^{\frac{q}{q-1}}f(t)\,dx\,dt\right)^{\frac{q-1}{q}}, \\
\end{aligned}
$$
hence
\be\label{Holder2}
\int_{Q_\tau} |\nabla u| |\nabla \psi|\,dx\,dt
\le  C I_2^{\frac{q-1}{q}}\left(\int_0^{2\tau}\int_{\tau^r\le|x|\le 2\tau^r} |\nabla u|^q\psi \,dx\,dt\right)^{\frac{1}{q}}
\ee
with
$$
I_2(\tau):= \tau^{-\frac{rq}{q-1}} \int_{Q_\tau} |\nabla\xi(\tau^{-r}x)|^{\frac{q}{q-1}}f(t)\,dx\,dt.
$$
Using $\varepsilon$-Young inequality (with $\varepsilon =\frac{b}{2}$), we obtain in particular
\be\label{esII}
\int_{Q_\tau} |\nabla u| |\nabla \psi|\,dx\,dt\le \frac{b}{2} 
\int_{Q_{\tau}} |\nabla u|^q\psi\,dx\,dt +CI_2(\tau).
\ee
By \eqref{Fest}, \eqref{esI} and \eqref{esII}, it folows that
\be\label{esIII}
\frac{1}{2}\int_{Q_{\tau}} \left(|u|^p+b|\nabla u|^q\right)\psi\,dx\,dt+
\int_{\R^n}u_0(x) g(x)\,dx  \leq C\left(I_1(\tau)+I_2(\tau)\right),
\ee

Let us next estimate $I_i(\tau)$, $i=1,2$.
 By the properties of $\psi, g$ and the change of variables $t=\tau s$, $x=\tau^r y$, we deduce that 
$$
\begin{aligned}
I_1(\tau)
&= \tau^{-\frac{p}{p-1}}\left(\int_0^{2\tau} |\theta_t(\tau^{-1}t)|^{\frac{p}{p-1}}\,dt\right)
\left(\int_{|x|\le 2\tau^r} \xi^{\frac{q}{q-1}}(\tau^{-r}x)\,dx\right) \\
&= \tau^{nr+1-\frac{p}{p-1}}\left(\int_0^2 |\theta_t(s)|^{\frac{p}{p-1}}\,ds\right)
\left(\int_{|y|\le 2} \xi^{\frac{q}{q-1}}(y)\,dy\right),
\end{aligned}
$$
hence
\be \label{estI1tau}
I_1(\tau)=C \tau^{nr-\frac{1}{p-1}}.
\ee
 Similarly, for $I_2(\tau)$, we compute
$$
\begin{aligned}
I_2(\tau)
&=\tau^{-\frac{rq}{q-1}}
\left(\int_0^{2\tau}\theta^{\frac{p}{p-1}}(\tau^{-1}t)\,dt \right) \left(\int_{|x|\le 2\tau^r} |\nabla\xi(\tau^{-r}x)|^{\frac{q}{q-1}}\, dx\right) \\
&= \tau^{nr+1-\frac{rq}{q-1}}\left(\int_0^2 \theta^{\frac{p}{p-1}}(s)\,ds\right)
\left(\int_{|y|\le 2} |\nabla\xi(y)|^{\frac{q}{q-1}}\,dy\right),
\end{aligned}
$$
hence 
\be \label{estI2tau}
I_2(\tau)=C \tau^{1+nr-\frac{rq}{q-1}}.
\ee

\medskip

Now,  combining \eqref{esIII}--\eqref{estI2tau}, we obtain
\be \label{TEST}
\begin{aligned}
\frac{1}{2}\int_{Q_{\tau}} 
&\left(|u|^p+b|\nabla u|^q\right)\psi\,dx\,dt+ \int_{\R^n} u_0(x) \xi^{\frac{q}{q-1}}(\tau^{-r}x)\,dx 
=\frac{1}{2}\int_{Q_{\tau}} \left(|u|^p+b|\nabla u|^q\right)\psi\,dx\,dt\\
&+\int_{\R^n}u_0(x) g(x)\,dx 
\le C \left(\tau^{nr-\frac{1}{p-1}}+\tau^{nr+1-\frac{rq}{q-1}}\right),\quad \tau>0. \\
\end{aligned}
\ee
Let us make the choice
$r=\frac{p(q-1)}{q(p-1)}$. With such $r$, one has
\be \label{propchoicer}
nr-\frac{1}{p-1}=1+nr-\frac{rq}{q-1}=\frac{(q-1)(np-1)-1}{q(p-1)}
\ee
and we deduce from \eqref{TEST} that, for all $\tau>0$,
\be \label{estcombined}
\frac{1}{2}\int_{Q_{\tau}} \left(|u|^p+b|\nabla u|^q\right)
\theta^{\frac{p}{p-1}}\Bigl(\frac{t}{\tau}\Bigr)\xi^{\frac{q}{q-1}}\Bigl(\frac{x}{\tau^{r}}\Bigr)\,dx\,dt 
+\int_{\R^n} u_0(x) \xi^{\frac{q}{q-1}}\Bigl(\frac{x}{\tau^{r}}\Bigr)\,dx\leq C \tau^\frac{(q-1)(np-1)-1}{q(p-1)}.
\ee

 First consider the case $(q-1)(np-1)<1$.
Using Fatou's lemma in the first integral and the dominated convergence theorem in the second one, 
recalling $u_0\in L^1(\R^n)$, we may
pass to the infimum limit as $\tau\to \infty$ in inequality \eqref{estcombined} to obtain
$$
\frac{1}{2}\int_0^\infty\int_{\R^n} \left(|u|^p+b|\nabla u|^q\right)\,dx\,dt 
\le \frac{1}{2}\int_0^\infty\int_{\R^n} \left(|u|^p+b|\nabla u|^q\right)\,dx\,dt +  \int_{\R^n} u_0(x)\,dx\leq 0,
$$
 hence $u\equiv 0$.

Finally let us consider the case $(q-1)(np-1)=1$. 
Then  \eqref{estI1tau}, \eqref{estI2tau},  \eqref{propchoicer} and \eqref{estcombined} guarantee that
$I_1(\tau)+I_2(\tau)\le C$ for all $\tau>0$
and that
\be \label{integrabilityprop}
\int_0^\infty\int_{\R^n} \left(|u|^p+b|\nabla u|^q\right)\,dx\,dt <\infty.
\ee
Going back to \eqref{Fest}, \eqref{Holder1}, \eqref{Holder2}, we thus deduce that
\be \label{integrabilityprop2}
\begin{aligned}
\int_{Q_{\tau}}& \left(|u|^p+b|\nabla u|^q\right)\psi\,dx\,dt
+\int_{\R^n}u_0(x) g(x)\,dx \\
&\le C\left(\int_\tau^{2\tau}\int_{\R^n} |u|^p \,dx\,dt\right)^{\frac{1}{p}}
+C\left(\int_0^{2\tau}\int_{\tau^r\le|x|\le 2\tau^r} |\nabla u|^q \,dx\,dt\right)^{\frac{1}{q}}.
\end{aligned}
\ee
But \eqref{integrabilityprop} along with the dominated convergence theorem imply that the RHS of 
\eqref{integrabilityprop2} goes to $0$ as $\tau\to\infty$.
It then follows again that $u\equiv 0$.
This completes the proof of Theorem~\ref{T2}.
\end{proof}

\section{The study of problem \eqref{pbm1b}} 
\label{sec3}

We now consider the inhomogeneous problem \eqref{pbm1b}.
Our  third main result is given by the following theorem. 
Here and in what follows we use the convention $n/(n-2)=\infty$ for $n=2$.

\medskip

\begin{thm}\label{T3}
Let $n\geq 2$, $p,q>1$ and $b>0$. 
\begin{itemize}
\item[(i)] Assume 
$$
p<\frac{n}{n-2}\quad\hbox{or}\quad q< \frac{n}{n-1}.
$$
Then  for any $u_0\in BC^1(\R^n)$ and  $h\in  BC^1(\R^n)\cap L^1(\R^n)$ such that $\int_{\R^n}h(x)\,dx >0$, the solution to problem \eqref{pbm1b} blows up in finite time.
\item[(ii)] Let $n\ge 3$ and assume that 
\be\label{hyppq2}
p>\frac{n}{n-2}\quad\hbox{and}\quad q>\frac{n}{n-1}.
\ee
Then there exists $h\in BC^1(\R^n)$ with $h>0$ in $\R^n$, such that 
problem \eqref{pbm1b} admits positive global classical solutions for suitably small positive initial data $u_0\in BC^1(\R^n)$.
\end{itemize}
\end{thm}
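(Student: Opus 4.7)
For part (i), I would adapt the rescaled test function method of Theorem~\ref{T2}. The new positive forcing $\int_{Q_\tau} h\psi\,dx\,dt$, which by dominated convergence behaves like $\tau \int_{\R^n} h\,dx$ as $\tau \to \infty$, plays the role that the hypothesis $\int u_0 \ge 0$ did there. Since $u_0$ need not be integrable, $\left|\int u_0\,g\,dx\right| \le C \|u_0\|_\infty \tau^{nr}$ must remain of lower order than $\tau$; this forces $nr < 1$. In the case $q < n/(n-1)$, I keep the Theorem~\ref{T2} test function $\psi_\tau(t,x) = \theta^{p/(p-1)}(t/\tau)\,\xi^{q/(q-1)}(x/\tau^r)$. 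One integration by parts in $t$ and one in $x$, then $\varepsilon$-Young, produce
\begin{equation*}
\int_{Q_\tau} h\psi \,dx\,dt + \int_{\R^n} u_0\,g\,dx
\le C\bigl(\tau^{nr - 1/(p-1)} + \tau^{1 + nr - rq/(q-1)}\bigr).
\end{equation*}
The hypothesis $q < n/(n-1)$ is precisely what forces the second exponent below $1$ for $r > 0$; choosing $r \in (0, 1/n)$ small brings all three relevant exponents (including $nr$ from the $u_0$ term) below $1$, and dividing by $\tau$ yields the contradiction.

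In the case $p < n/(n-2)$ (with $q$ possibly $\ge n/(n-1)$), the gradient scaling of Theorem~\ref{T2} fails since $1 + nr - rq/(q-1)$ cannot be made $<1$. Instead, since $b|\nabla u|^q \ge 0$, I would drop that term and apply the test function method to the differential inequality $u_t - \Delta u \ge |u|^p + h$, testing against $\psi_\tau(t,x) = \theta^{p'}(t/\tau)\,\xi^{k}(x/\tau^s)$ with $p' = p/(p-1)$ and $k$ large enough (e.g.\ $k \ge 2p'$) that $|\Delta\xi^k|^{p'}\xi^{-kp'/p}$ is bounded on $\mathrm{supp}\,\xi$. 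Two spatial integrations by parts (producing $\int u\,\Delta\psi$ in place of $\int \nabla u\cdot\nabla\psi$) followed by $\varepsilon$-Young give
\begin{equation*}
\int_{Q_\tau} h\psi \,dx\,dt + \int_{\R^n} u_0\,g\,dx
\le C\bigl(\tau^{1 - p' + ns} + \tau^{1 + s(n - 2p')}\bigr),
\end{equation*}
and since $p < n/(n-2) \iff 2p' > n$, taking $s \in (0, 1/n)$ makes both exponents strictly less than $1$. The contradiction again follows as $\tau \to \infty$.

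For part (ii), my plan is to exhibit an explicit stationary classical supersolution $Z(x) = \varepsilon V(x)$ with $V(x) = (1+|x|^2)^{-\alpha/2}$ and $\varepsilon > 0$ small, choosing $\alpha$ in the interval
\[
\Bigl(\max\!\Bigl(\tfrac{2}{p-1},\,\tfrac{(2-q)_+}{q-1}\Bigr),\; n-2\Bigr),
\]
which is nonempty \emph{precisely} under hypothesis~\eqref{hyppq2}. A direct computation gives
\[
-\Delta V(x) = \alpha(1+|x|^2)^{-\alpha/2-2}\bigl[n + (n-\alpha-2)|x|^2\bigr] \ge c_0\,(1+|x|^2)^{-\alpha/2-1}
\]
with $c_0 = \alpha(n-\alpha-2) > 0$ --- crucially, positive on \emph{all} of $\R^n$, not just asymptotically. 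The choice of $\alpha$ also guarantees the pointwise bounds $V^p \le (1+|x|^2)^{-\alpha/2-1}$ and $|\nabla V|^q \le C(1+|x|^2)^{-\alpha/2-1}$. Hence for $\varepsilon > 0$ small one has $-\Delta Z - Z^p - b|\nabla Z|^q \ge (c_0\varepsilon/2)(1+|x|^2)^{-\alpha/2-1}$ on $\R^n$, and setting $h(x) := (c_0\varepsilon/4)(1+|x|^2)^{-\alpha/2-1}$ --- a smooth, strictly positive function lying in $BC^1(\R^n)$ --- makes $Z$ a classical supersolution of \eqref{pbm1b}. The parabolic comparison principle then forces $0 \le u(t,x) \le Z(x) \le \varepsilon$ on $[0,T)\times\R^n$ for any $u_0 \in BC^1(\R^n)$ with $0 \le u_0 \le Z$, and Proposition~\ref{PR1}(ii) concludes $T = \infty$. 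The main obstacle I expect is reconciling the three simultaneous pointwise conditions $V^p, |\nabla V|^q, h \ll -\Delta V$ with a single exponent $\alpha < n-2$ --- which is precisely what the joint hypothesis \eqref{hyppq2} makes possible.
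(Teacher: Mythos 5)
Your proposal is correct, and both parts follow the same basic strategy as the paper: a rescaled test-function argument for (i) and an explicit decaying stationary supersolution $\eps(1+|x|^2)^{-\alpha/2}$ for (ii); part (ii) is in fact identical to the paper's proof up to the substitution $\alpha=2k$, including the admissible interval for the exponent and the definition of $h$ as the residual. The differences are confined to part (i). For the case $p<\frac{n}{n-2}$ the paper simply drops the gradient term and invokes the known result of Bandle--Levine--Zhang for the inhomogeneous semilinear inequality, whereas you reprove it via the second-order test function $\theta^{p'}(t/\tau)\xi^k(x/\tau^s)$ with $k\ge 2p'$; your exponent bookkeeping ($2p'>n\iff p<\frac{n}{n-2}$) is right, so this buys self-containedness at the cost of a page. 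For the case $q<\frac{n}{n-1}$ the paper replaces the time cutoff by one supported in $(0,\tau)$ (vanishing at $t=0$), which eliminates the $\int u_0\,g\,dx$ boundary term altogether and thus requires no hypothesis or estimate on $u_0$; you instead keep $\theta(0)=1$ and absorb that term via $\bigl|\int u_0\,g\,dx\bigr|\le C\|u_0\|_\infty\tau^{nr}=o(\tau)$, which works but forces the extra constraint $nr<1$ and the choice of $r$ small, while the paper takes the single equalizing value $r=\frac{p(q-1)}{q(p-1)}$ and gets both error exponents equal to $\frac{p}{p-1}\cdot\frac{n(q-1)-q}{q}<0$. Both routes reach the same contradiction $\int_{\R^n}h\,dx\le 0$.
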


\begin{rem}
 (i) Observe that in  case (i) of Theorem \ref{T3} the solution to problem \eqref{pbm1b} may change sign.
  Also, the assumption $h\in L^1(\R^n)$ may be replaced with $h_-\in L^1(\R^n)$,
and the conclusion then remains true when $\int_{\R^n}h(x)\,dx=+\infty$.

 \smallskip

(ii) Similarly to problem \eqref{pbm1}, a phenomenon of discontinuity of the
Fujita exponent in the presence of the gradient term occurs.  Namely, as long as $q>\frac{n}{n-1}$ , the Fujita exponent remains the same as for the problem without gradient term, i.e. $p=\frac{n}{n-2}$, and when $q$ becomes smaller than $\frac{n}{n-1}$ ($n\geq 3$), the Fujita exponent jumps to $p=\infty$. 
\end{rem}

\begin{proof}[Proof of Theorem \ref{T3}.]
(i) When $p<\frac{n}{n-2}$, the result follows immediately from \cite[Theorem 2.1]{BLZ}. Let us thus assume $q< \frac{n}{n-1}$. Let us suppose that $u$ is a global solution to \eqref{pbm1b}. Given $\tau > 0$, we introduce the test function $\varphi=\varphi_\tau$ given by
$$
\varphi(t,x)=a(t) g(x),\quad (t,x)\in (0,\tau)\times \mathbb{R}^n,
$$
with $g$ is given by \eqref{fg} and 
$$
a(t)=\eta^{\frac{p}{p-1}}\left(\frac{t}{\tau}\right),
$$
where $\eta\in C^\infty(\R)$ is such that 
$$
\eta\geq 0,\quad \eta\not\equiv 0\quad \mbox{and}\quad \mbox{supp}(\eta) \subset (0,1).
$$
Next, multiplying \eqref{pbm1b} by $\varphi$,  integrating over $S_\tau:=(0, \tau)\times \R^n$, and using an integration by parts,  we obtain
$$
-\int_{S_\tau} u\varphi_t\,dx\,dt +\int_{S_\tau}\nabla u\cdot\nabla \varphi\,dx\,dt=\int_{S_{\tau}} \left(|u|^p+b|\nabla u|^q+h(x)\right)\varphi\,dx\,dt,
$$ 
which yields
$$
\int_{S_{\tau}} \left(|u|^p+b|\nabla u|^q\right)\varphi\,dx\,dt+
\int_{S_\tau}h(x) \varphi\,dx\,dt  \leq \int_{S_\tau} |u| |\varphi_t|\,dx\,dt+
\int_{S_\tau}|\nabla u||\nabla \varphi|\,dx\,dt.
$$
Further, following a similar argument as in the proof of Theorem \ref{T2}, one obtains
\be \label{esh1}
\frac{1}{2}\int_{S_{\tau}} 
\left(|u|^p+b|\nabla u|^q\right)\varphi\,dx\,dt+ \int_{S_\tau} h(x)\varphi\,dx\,dt 
\le C \left(\tau^{nr-\frac{1}{p-1}}+\tau^{nr+1-\frac{rq}{q-1}}\right),\quad \tau>0.
\ee
On the other hand, one has
\be\label{esth2}
\int_{S_\tau} h(x)\varphi\,dx\,dt =\left(\int_0^\tau a(t)\,dt \right)
\left(\int_{\R^n} h(x) g(x)\,dx\right)
= C\tau \int_{\R^n} h(x) \xi^{\frac{q}{q-1}}\left(\frac{x}{\tau^{r}}\right)\,dx.
\ee
Hence, combining \eqref{esh1} with \eqref{esth2}, there holds
\be\label{esth3}
\int_{\R^n} h(x) \xi^{\frac{q}{q-1}}\left(\frac{x}{\tau^{r}}\right)\,dx\le C\left(\tau^{nr-\frac{1}{p-1}-1}+\tau^{nr-\frac{rq}{q-1}}\right),\quad \tau>0.
\ee
Now, let us make the choice $r=\frac{p(q-1)}{q(p-1)}$. With such $r$, one has
$$
nr-\frac{1}{p-1}-1=nr-\frac{rq}{q-1}=\frac{p}{p-1}  \left(\frac{n(q-1)-q}{q}\right)
$$
and we deduce from \eqref{esth3} that, for all $\tau>0$,  
\be\label{esth4}
\int_{\R^n} h(x) \xi^{\frac{q}{q-1}}\left(\frac{x}{\tau^{r}}\right)\,dx\le C \tau^{\frac{p}{p-1}  \left(\frac{n(q-1)-q}{q}\right)}.
\ee
Since $q<\frac{n}{n-1}$ and $h\in L^1(\R^n)$, passsing to the limit as $\tau\to \infty$ in \eqref{esth4} and using the dominated convergence theorem, 
we obtain
$$
\int_{\R^n} h(x)\,dx\leq 0,
$$
which contradicts the fact that $\int_{\R^n} h(x)\,dx>0$. 

\smallskip 

(ii) We look for a supersolution of the form $v(x)=\eps(1+|x|^2)^{-k}$, where $k,\eps>0$, as well as $h$,
will determined below. We compute
$$
|\nabla v(x)|=2\eps k|x|(1+|x|^2)^{-k-1}\le 2\eps k(1+|x|^2)^{-k-{1\over 2}},$$
$$
\Delta v(x)=2\eps k\Bigl[-n(1+|x|^2)^{-k-1}+2(k+1)|x|^2(1+|x|^2)^{-k-2}\Bigr]
\le 2\eps k{2 (k+1)-n\over (1+|x|^2)^{k+1}}.
$$
Therefore,
$$
\Delta v+v^p+b|\nabla v|^q
\le \eps(1+|x|^2)^{-k-1}\Bigl[2k\bigl(2(k+1)-n\bigr)+\frac{\eps^{p-1}}{(1+|x|^2)^{k(p-1)-1}}
+\frac{2^qb\eps^{q-1}}{(1+|x|^2)^{k(q-1)+\frac{q}{2}-1}}\Bigr].
$$
Owing to assumption \eqref{hyppq2}, we may choose $k>0$ such that
$$
\textstyle\max\Bigl\{\frac{1}{p-1},\frac{2-q}{2(q-1)}\Bigr\}<k<\frac{n-2}{2}.
$$
Then choosing $\eps>0$ sufficiently small, we get
$$
\Delta v+v^p+b|\nabla v|^q
\le \eps(1+|x|^2)^{-k-1}\Bigl[2k\bigl(2(k+1)-n\bigr)+\eps^{p-1}+2^qb\eps^{q-1}\Bigr]<0
\quad\hbox{ in $\R^n$.}
$$
Setting $h(x):=-\Delta v-v^p-b|\nabla v|^q>0$, we see that $v$ is a (super-)solution to the PDE in \eqref{pbm1b}. Starting from any initial data $u_0\in BC^1(\R^n)$ with $0\le u_0\le v$, we deduce from the comparison principle
that $u(t,x)\le v(x)$ in $(0,T)\times  \R^n$. Applying Proposition \ref{PR1}, we conclude that $T=\infty$.
\end{proof}

\noindent{\bf Acknowledgements.} The first and second authors extend their appreciation to the Deanship of Scientific Research at King Saud University for funding this work through research group No. RGP--237.

\vskip 5mm

\vskip 3mm

\font\tenrm=cmr10
{\tenrm
\noindent Mohamed Jleli,
Department of Mathematics, College of Science, King Saud University, P.O. Box 2455, Riyadh, 11451, Saudi Arabia. 
E-mail: jleli@ksu.edu.sa
\vskip 1.5mm

\noindent Bessem Samet,
Department of Mathematics, College of Science, King Saud University, P.O. Box 2455, Riyadh, 11451, Saudi Arabia. 
E-mail: bsamet@ksu.edu.sa
\vskip 1.5mm

\noindent Philippe Souplet,
 Universit\'e Paris 13, Sorbonne Paris Cit\'e,
CNRS UMR 7539, Laboratoire Analyse, G\'{e}om\'{e}trie et Applications,
93430 Villetaneuse, France. 
E-mail: souplet@math.univ-paris13.fr 

}

\end{document}